\newtheorem{theorem}{Theorem}[section]
\newtheorem{lemma}[theorem]{Lemma}
\newtheorem{remark}[theorem]{Remark}
\def\sqr#1#2{\vbox{\hrule height .#2pt
\hbox{\vrule width .#2pt height #1pt \kern #1pt
\vrule width .#2pt}\hrule height .#2pt }}
\def\begi{\begin{itemize}}
\def\endi{\end{itemize}}
\def\bega{\begin{array}}
\def\enda{\end{array}}
\def\forall{\hbox{for every~ }}
\def\forall{\hbox{for all}~}
\def\bel{\begin{equation}\label}
\def\eeq{\end{equation}}
\begin{document}

\title{A Link-based Mixed Integer LP Approach for Adaptive Traffic Signal Control}
\author{Ke Han$^{a}\thanks{Corresponding author, e-mail: kxh323@psu.edu}$
\qquad Terry L. Friesz$^{b}\thanks{e-mail: tfriesz@psu.edu}$
\qquad Tao Yao$^{b}\thanks{e-mail: tyy1@engr.psu.edu}$ \\\\
$^{a}$\textit{Department of Mathematics}\\
\textit{Pennsylvania State University, University Park, PA 16802, USA}\\
$^{b}$\textit{Department of  Industrial and Manufacturing Engineering}\\
\textit{Pennsylvania State University, University Park, PA 16802, USA}}
\date{}
\maketitle

\begin{abstract}
This paper is concerned with adaptive signal control problems on a road network, using a link-based kinematic wave model \citep{HPFY}. Such a model employs the Lighthill-Whitham-Richards model with a triangular fundamental diagram. A variational type argument \citep{Lax, Newell} is applied so that the system dynamics can be determined without knowledge of the traffic state in the interior of each link. A Riemann problem for the signalized junction is explicitly solved; and an optimization problem is formulated in continuous-time with the aid of binary variables. A time-discretization turns the optimization problem into a {\it mixed integer linear program} (MILP). Unlike the cell-based approaches \citep{Daganzo 1995, Lin and Wang, Lo 1999a},  the proposed framework does not require modeling or computation within a link, thus reducing the number of (binary) variables and computational effort.

The proposed model is free of vehicle-holding problems, and captures important features of signalized networks such as physical queue, spill back, vehicle turning, time-varying flow patterns and dynamic signal timing plans. The MILP can be efficiently solved with standard optimization software.
\end{abstract}

\section{\label{Intro}Introduction}

Traffic signal is an essential element to the management of the transportation network. For the past several decades, signal control strategies have evolved from ones developed based on historical information, often referred to as the fixed timing plan, to the generation of control strategies in which the control system is fully responsive. In the latter case, the cycle lengths and splits of the signal are determined based on real-time information. Representatives of such signal-control systems are OPAC \citep{Gartner}, RHODES \citep{MH}, SCAT \citep{SD} and SCOOT \citep{HRBR}.

The performance of a traffic signal control system depends on the optimization procedure embedded therein. We distinguish between two optimization procedures: 1) heuristic approach, such as those developed with feedback control, genetic algorithms and fuzzy logic; and 2) exact approach, such as those arising from mathematical control theory and  mathematical programming. Among these exact approaches, the {\it mixed integer programs} (MIPs) are of particular interest and has been used extensively in the signal control literature.  \cite{Improta and Cantarella} formulated the traffic signal control problem for a single road junction as a mixed binary integer program. \cite{Lo 1999a} and \cite{Lo 1999b} employed the {\it cell transmission model} (CTM) \citep{Daganzo 1994, Daganzo 1995} and casted a signal control problem as mixed integer linear program. In these papers, the author addressed time-varying traffic patterns and dynamic timing plan.  In \cite{Lin and Wang}, the same formulation based on CTM was applied to capture more realistic features of signalized junctions such as the total number of vehicle stops and signal preemption in the presence of emergency vehicles. One subtle issue associated with  CTM-based mathematical programs is the phenomena known as traffic holding, which stem from the linear relaxation of the nonlinear dynamic.  Such an action induces the unintended holding of vehicles, i.e., a vehicle is held at a cell even though there is capacity available downstream for the vehicle to advance. The traffic holding can be avoided by introducing additional binary variables, see \cite{Lo 1999}. However, this approach ends up with a substantial amount of binary variables and yields the program computationally demanding. An alternative way to treat holding problem is to manipulate the objective function such that the optimization mechanism enforces the full utilization of available capacities in the network. This approach however, strongly depends on specific structure of the problem and the underlying optimization procedure. Specific discussion on traffic holding can be found in \cite{SNZ}

This paper presents a novel MILP formulation for signal control problem based on the {\it link-based kinematic wave model} (LKWM). This model was proposed in \cite{HPFY} as a continuous-time extension of the Lighthill-Whitham-Richards (LWR) model \citep{Lighthill and Whitham, Richards} to networks. This model describes network dynamics with variables associated to the entrance and exit of each link. It employs a Newell-type variational argument \citep{Newell, VT1} to capture shock waves and vehicle spillback. Analytical properties of this model pertaining to solution existence, uniqueness and well-posedness are provided in \cite{HPFY}. A discrete-time version of the LKWM, known as the {\it link transmission model}, was discussed in \cite{LTM}. In contrast to the cell-based math programming approaches where the variables of interest correspond to each cell and each time interval, the model proposed in this paper is link-based, i.e.  the variables are associated with each link and each time interval. The resulting MILP thus substantially reduces the number of (binary) variables and hence the computational effort. In addition, the link-based approach prevents vehicle holding within a link without using binary variables. The resulting mixed integer linear program can be solved efficiently with commercial optimizers such as CPLEX.

The formulation in this paper captures key phenomena of vehicular flow at junctions such as the formation, propagation and dissipation of physical queues, spill back and vehicle turning. It also considers important features of signal control such as dynamic timing plan and time-varying flow patterns.  In the remainder of this introduction, we briefly review the LWR model and the variational method for solving the corresponding Hamilton-Jacobi equation. This will serve as preliminary background as we proceed in Section \ref{LLWR} to discuss our link-based network mode.

\subsection{\label{LWR} Lighthill-Whitham-Richards model}

Following the classical model introduced by \cite{Lighthill and Whitham} and \cite{Richards}, we model the traffic dynamics on a link with the following first order  {\it partial differential equation} (PDE), which describes the spatial-temporal evolution of density and flow
\begin{equation}\label{LWRPDE}
{\partial\over \partial t}\rho(t,\,x)+{\partial\over\partial x} f\big(\rho(t,\,x)\big)~=~0
\end{equation}
where $\rho(t,\,x): [0,\,+\infty)\times[a,\,b]\rightarrow [0,\,\rho_{j}]$ is average vehicle density, $f(\rho): [0,\,\rho^{jam}]\rightarrow [0,\,C]$ is average flow. $\rho^{jam}$ is jam density, $C$ is flow capacity.  The function $f(\cdot)$ articulates a density-flow relation and is commonly referred to as the fundamental diagram.

Classical mathematical results on the first-order hyperbolic equations of the form (\ref{LWRPDE}) can be found in \cite{Bressan 2000}. For a detailed discussion of numerical schemes for conservation laws, we refer the reader to \cite{Godunov} and \cite{LeVeque}. A well-known discrete version of the LWR model, the Cell Transmission Model (CTM), was introduced by \cite{Daganzo 1994, Daganzo 1995}. PDE-based models have been studied extensively also in the context of vehicular networks, with a list of selected references including \cite{Bretti et al, Coclite et al, Daganzo 1995, Herty and Klar, Holden and Risebro, Jin 2010, Jin and Zhang 2003, Lebacque and Khoshyaran 1999, Lebacque and Khoshyaran 2002}.

\subsection{The Hamilton-Jacobi equation and the Lax-Hopf formula} 

Initially introduced by \cite{Lax, Lax1}, then extended in \cite{ABP} and \cite{LeFloch}, and applied to traffic theory in \cite{CC1, VT1, LWRDUE, HPFY}, the Lax-Hopf formula provides a new characterization of the solution to the hyperbolic conservation law and Hamilton-Jacobi equation. The Lax formula is derived from the characteristics equations associated with the H-J equation, which arises in the classical calculus of variations and mathematical mechanics. The reader is referred to \cite{Evans} for a detailed discussion.

Let us introduce the function $N(\cdot,\,\cdot): [0,\,+\infty)\times[a,\,b]\rightarrow \mathbb{R}$, such that
\begin{equation}\label{Ndef}
{\partial\over \partial x} N(t,\,x)~=~-\rho(t,\,x),\qquad{\partial\over \partial t} N(t,\,x)~=~f\big(\rho(t,\,x)\big)
\end{equation}
 The function $N(t,\,x)$ is sometimes referred to as Moskowitz function or Newell-curves. It has been studied extensively, in \cite{CC1, VT1, Moskowitz, Newell}. A well-known property of $N(\cdot,\,\cdot)$ is that it satisfies the following Hamilton-Jacobi equation
\begin{equation}\label{HJeqn}
{\partial\over \partial t} N(t,\,x)-f\left( -{\partial\over\partial x} N(t,\,x)\right)~=~0\qquad (t,\,x)\in[0,\,+\infty)\times [a,\,b]
\end{equation}

\noindent Let $\Omega$ be a subset of  $[0,\,+\infty)\times[a,\,b]$, and $c(\cdot,\,\cdot): \Omega\rightarrow\mathbb{R}_+$ be a value condition which prescribes the value of $N(\cdot,\,\cdot)$ on $\Omega$. The solution to the H-J equation (\ref{HJeqn}) with condition $c(\cdot,\,\cdot)$ is given by the Lax-Hopf formula \cite{CC1, VT1, LeFloch}. 

\begin{equation}\label{LHformula}
N(t,\,x)~=~\inf\big\{c(t-T,\,x-T\,u)+T\,f^*(u)\big\} 
\end{equation}
such that $u\in[f'\big(\rho^{jam}-\big),\,f'(0+)]$, $T\geq 0$ and $(t-T,\,x-T\,u)\in \Omega$. Where $f^*(\cdot)$ is the Legendre (concave) transformation of $f(\cdot)$.

\subsection{Oganization}
The rest of this article is organized as follow. In section \ref{LLWR}, we present a network  model known as the link-based kinematic wave model \citep{HPFY}. Section \ref{secTS} formulates the traffic signal control problem in both continuous- and discrete-time, based on the LKWM. The discrete-time problem is further formulated as a mixed integer linear program. Section \ref{secNE} presents a numerical example, which demonstrates and evaluates the proposed formulation.

\section{\label{LLWR} Link-based Kinematic Wave Model}
In this section, we present a kinematic wave model on networks, with a triangular fundamental diagram for each link. Unlike the cell-based models \cite{Daganzo 1994, Daganzo 1995}, the proposed model does not require modeling or computation in the interior of the link. For this reason, we call it the link-based kinematic wave model.

\subsection{\label{statevariables} State variables of the system}

Consider the link represented by an interval $[a,\,b]$, with $b-a=L>0$. In the derivation of the LKWM, we select flow $q(t,\,x)$ and regime $r(t,\,x)$ as the state variables for the link, instead of density.  It is obvious  that a single value of $q$ corresponds to two traffic states: 1) the free flow phase ($r=0$); and 2) the congested phase ($r=1$). Therefore, the pair $(q(t,\,x),\,r(t,\,x))\in [0,\,C]\times \{0,\,1\}$ determines a unique density value. This simple observation gives rise to the following map
\begin{equation}\label{psidef}
\psi(\cdot): [0,\,C]\times \{0,\,1\}~\rightarrow ~ [0,\,\rho^{jam}],\qquad ( q,\,r)\mapsto \rho
\end{equation}

%
%
%
%

\subsection{Riemann problem at a junction with one incoming link}\label{secjunction}

Extension of the kinematic wave model to a network turns out to be subtle; the issues associated therein are 1) a proper definition of a weak entropy solution at a junction of arbitrary topology; 2) uniqueness and well-posedness of the entropy solution. The reader is referred to \cite{HPFY, Garavello and Piccoli, Jin 2010, Jin and Zhang 2003} for some specific discussion. A junction model can be analyzed by considering a Riemann problem, which is an initial value problem with constant datum on each incoming and outgoing link. Due to space limitation, instead of a comprehensive discussion on various types of Riemann problems, we focus on the Riemann problem for a particular junction, that is, the one with one incoming link and $n\geq 2$ outgoing links. This is because we assume that during one signal phase, cars from only one incoming link can enter the junction.

In order to model vehicle turning, we fix a traffic distribution matrix 
$$
A~=~\begin{pmatrix} \alpha_{1,2}  & \alpha_{1,3} & \ldots  & \alpha_{1,n+1}\end{pmatrix}
$$
where $0\leq \alpha_{1, i}\leq 1,\, i=2,\ldots, n+1$, $\sum_{i=2}^{n+1}\alpha_{1,i}=1$. The coefficients $\alpha_{1,i}$ determines how the traffic from the incoming link $I_1$ distributes in percentages to the outgoing link $I_i$. For simplicity, we assume $A$ is time-independent. Note that there is no substantial difficulty with transforming our modeling framework to deal with time-varying distribution matrices; such extension, however, requires additional information on route choices, which is beyond the scope of this paper.

The next theorem characterizes the solution to the Riemann problem  at junction with one incoming link. 

\begin{theorem}\label{thm1}
Consider a junction with one incoming link $I_1$ and $n\geq 2$ outgoing links $I_{2},\ldots, I_{n+1}$. For every initial data $y_{1,0}, \ldots, y_{n+1,0}\in [0,\,C]\times \{0,\,1\}$, there exists a unique $n+1$-tuple
$$
\hat y_1,\,\ldots, \, \hat y_{n+1}\in [0,\,C]\times \{0,\,1\}
$$
where $\hat y_i=(\hat q_i,\,\hat r_i)$, such that the solutions to the initial-boundary value problems at the junction
$$
\begin{cases}
{\partial\over \partial t}\rho(t,\,x)+{\partial\over\partial x} f_1\big(\rho(t,\,x)\big)~=~0\\
\rho(0,\,x)~= ~ \psi\big(y_{1,0}\big) \\
\rho(t,\,b_1)~=~\psi\big(\hat y_1\big)
\end{cases}
\qquad 
\begin{cases}
{\partial\over \partial t}\rho(t,\,x)+{\partial\over\partial x} f_i\big(\rho(t,\,x)\big)~=~0\\
\rho(0,\,x)~= ~ \psi\big(y_{i,0}\big) \\
\rho(t,\,a_i)~=~\psi\big(\hat y_i\big)
\end{cases}
\quad j~=~2,\,\ldots,\,n+1
$$
is the admissible weak solution to the junction problem in the sense defined in \cite{Coclite et al}. In addition, we have the following characterization: the boundary states $(\hat q_i,\,\hat r_i),\, i=1,\ldots, n+1$ are given by

\begin{equation}\label{thm1eqn3}
\hat q_1~=~\min\left\{q_1^{max},\,{q_2^{max}\over \alpha_{1,2}},\, {q_3^{max}\over \alpha_{1,3}},\,\ldots,\, {q_{n+1}^{max}\over \alpha_{1, n+1}}\right\}
\end{equation}
\begin{equation}\label{thm1eqn4}
\hat r_1~=~\begin{cases}
1,\qquad &\hbox{if}\quad r_{1,0}~=~1\\
0, \qquad &\hbox{if}\quad r_{1,0}~=~0,~ \hat q_1~=~q_{1,0}\\
1, \qquad &\hbox{if}\quad r_{1,0}~=~0,~ \hat q_1~<~q_{1,0}
\end{cases}
\end{equation}
\begin{equation}\label{thm1eqn5}
\hat q_i~=~\alpha_{1,i}\,\hat q_1,\qquad i~=~2,\,\ldots,\,n+1
\end{equation}
\begin{equation}\label{thm1eqn6}
\hat r_i~=~\begin{cases}
0,\qquad & \hbox{if}\quad r_{i,0}~=~0\\
1, \qquad &\hbox{if}\quad r_{i,0}~=~1,~\hat q_i~=~q_{i,0}\\
0,\qquad &\hbox{if}\quad r_{i,0}~=~1,~\hat q_i~<~q_{i,0}
\end{cases}
\end{equation}
where
\begin{equation}\label{thm1eqn7}
 q_i^{max}~\doteq~\begin{cases}
 q_{i,0}+r_{i,0}(C_i-q_{i,0}),\qquad &i~=~1\\
 C_i+r_{i,0}(q_{i,0}-C_i),\qquad & i~=~2,\,\ldots,\,n+1
 \end{cases}
\end{equation}
\end{theorem}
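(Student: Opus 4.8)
The plan is to treat this junction Riemann problem as a collection of half-line initial--boundary value problems coupled through the distribution matrix $A$, and to select the boundary fluxes by the flux-maximization principle built into the admissibility notion of \cite{Coclite et al}. First I would recall that a solution is admissible provided that (a) on each link it is an entropy weak solution of the scalar LWR equation with the prescribed initial datum $\psi(y_{i,0})$ and boundary trace $\psi(\hat y_i)$; (b) the fluxes respect conservation and the turning ratios, i.e. $\hat q_i = \alpha_{1,i}\hat q_1$ for $i=2,\dots,n+1$ (which, since $\sum_i\alpha_{1,i}=1$, automatically yields $\hat q_1 = \sum_{i\ge 2}\hat q_i$); and (c) the through-flux $\hat q_1$ is maximal among all choices compatible with (a) and (b). These three requirements reduce the problem to computing, on each link, the set of boundary fluxes attainable by entropy-admissible waves that do not transport information from beyond the boundary.

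Second I would establish this attainable set for each link, exploiting the triangular fundamental diagram. For the incoming link $I_1$, whose junction lies at the downstream end $x=b_1$, only waves of nonpositive speed may be generated at the boundary; a short case analysis on the two branches of $f_1$ shows that the attainable boundary fluxes fill the interval $[0,\,q_1^{max}]$, where $q_1^{max}$ is the \emph{demand} of $y_{1,0}$, equal to $q_{1,0}$ in free flow and to $C_1$ in congestion, exactly the value in \eqref{thm1eqn7}. Symmetrically, for each outgoing link $I_i$ with junction at the upstream end $x=a_i$, only waves of nonnegative speed arise, and the attainable fluxes fill $[0,\,q_i^{max}]$ with $q_i^{max}$ the \emph{supply}, equal to $C_i$ in free flow and to $q_{i,0}$ in congestion, again matching \eqref{thm1eqn7}.

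Third I would impose admissibility (b)--(c). The turning constraints rewrite the outgoing feasibility bounds $\alpha_{1,i}\hat q_1\le q_i^{max}$ as $\hat q_1\le q_i^{max}/\alpha_{1,i}$, so the feasible set for $\hat q_1$ is $[0,\,\min\{q_1^{max},\,q_2^{max}/\alpha_{1,2},\dots,q_{n+1}^{max}/\alpha_{1,n+1}\}]$. Maximizing $\hat q_1$ selects the right endpoint, which is precisely \eqref{thm1eqn3}, and then \eqref{thm1eqn5} follows from the turning ratios; this simultaneously yields existence and uniqueness of the flux vector $(\hat q_1,\dots,\hat q_{n+1})$. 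Finally, with the fluxes fixed, the regime of each boundary state is determined by identifying the unique entropy-admissible wave joining the initial state to the boundary state: on $I_1$, an unchanged free-flow flux ($\hat q_1=q_{1,0}$) leaves the state free, a strict reduction ($\hat q_1<q_{1,0}$) forces a backward shock into the congested branch, and an initially congested state stays congested, which is \eqref{thm1eqn4}; the mirror-image analysis on each $I_i$, where an initially congested state whose inflow drops below $q_{i,0}$ is relaxed back to free flow, yields \eqref{thm1eqn6}.

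The main obstacle I expect is the second step: rigorously pinning down the attainable boundary-flux sets via the Bardos--LeRoux--N\'ed\'elec boundary condition and verifying that the selected waves are genuinely entropy-admissible on both branches of the triangular diagram, together with the degenerate capacity case $\hat q_i=C_i$ where the regime label sits on the boundary between the two phases. Once the demand/supply intervals are secured, the flux-maximization and the regime read-off are essentially bookkeeping, and uniqueness is inherited from the strict monotonicity of $f_i$ on each branch together with the uniqueness theory of \cite{Coclite et al}.
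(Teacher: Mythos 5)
Your proposal is correct and takes essentially the approach the paper intends: the paper itself omits the verification (remarking only that it is ``straightforward''), but its appeal to the admissibility notion of \cite{Coclite et al} and its identification of $q_i^{max}$ with the Lebacque--Khoshyaran demand/supply are exactly the demand/supply computation and flux-maximization-subject-to-turning-ratios argument you lay out. Your regime read-off in (\ref{thm1eqn4}) and (\ref{thm1eqn6}) via the sign of the admissible connecting wave is likewise the standard (and correct) way to complete the verification.
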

\begin{remark}
The quantity $q_i^{max}$ is the maximum flux an incoming (outgoing) link can send (receive) --  a quantity identified as demand (supply) by \cite{Lebacque and Khoshyaran 1999, Lebacque and Khoshyaran 2002}.
\end{remark}

The aforementioned map $(y_{i,0})_{i=1,\ldots, n+1}\mapsto (\hat y_{i})_{i=1,\ldots, n+1}$ is commonly referred to as the Riemann solver, see \cite{Garavello and Piccoli} for a formal discussion. Theorem \ref{thm1} describes the Riemann solver using the new state variables $q$ and $r$. The verification of theorem \ref{thm1} is straightforward. For junctions with arbitrary topology, the Riemann Solvers are not available in closed-form. Yet, in the case of one incoming link, we are able to express the Riemann solver explicitly.

In the next subsection, we analyze shock formation and propagation within one single link. The location of the shock wave is crucial as it determines the regime variable $r$ associated with the two boundaries of the link.

\subsection{\label{shock} Shock formation and propagation within the link}

We focus on solutions generated by assuming an initially empty network, i.e. $y_{i,0}=(0,\,0)$. The key to our analysis is the location of a so-called {\it separating shock}, which divides each link into two zones: free flow zone $(r=0)$, and congested zone $(r=1)$.  We begin with the fact that if the network is initially empty, then there can be at most one separating shock on each link

\begin{lemma}\label{sshocklemma}
For every link $I_i$ and any solution $y_i(t,\,x)=\big(q_i(t,\,x),\,r_i(t,\,x)\big)$ with $y_i(0,\,x)=(0,\,0)$, the following statement holds: 
\begin{enumerate}
\item For every $t\geq 0$, there exists at most one $x_i^*(t)\in(a_i,\,b_i)$ such that $r_i(t,\,x^*(t)-)<r_i(t,\,x^*(t)+)$
\item For all $x\in[a_i,\,b_i]$, 
\begin{align*}
&r_i(t,\, x)~=~0,\qquad \hbox{if}~~ x~<~x_i^*(t)\\
&r_i(t,\,x)~=~1,\qquad \hbox{if}~~ x~>~x_i^*(t)
\end{align*}
\end{enumerate}
\end{lemma}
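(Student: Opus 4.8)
The plan is to represent the Moskowitz function $N_i(\cdot,\cdot)$ on the single link $I_i=[a_i,b_i]$ through the Lax--Hopf/Newell formula (\ref{LHformula}) and to read the regime $r_i$ off from which value condition is active. For a triangular fundamental diagram the characteristic speeds take only the two extreme values $v_f:=f_i'(0+)>0$ (free flow) and $-w$ with $w:=-f_i'(\rho^{jam}-)>0$ (congestion), and $C=v_f\rho_c=w(\rho^{jam}-\rho_c)$, where $\rho_c$ is the critical density. Because $f_i$ is piecewise linear, the variational cost is additive and linear along straight characteristics, so the infimum in (\ref{LHformula}) is realized at one of the two endpoint speeds; this is exactly Newell's construction \citep{Newell, VT1}. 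Consequently $N_i(t,x)=\min\{N^\uparrow(t,x),\,N^\downarrow(t,x)\}$, where $N^\uparrow$ collects the free-flow contributions carried forward at speed $v_f$ from the entrance $a_i$ and from the (empty) initial line, while $N^\downarrow(t,x)=N_i\big(t-(b_i-x)/w,\,b_i\big)+\rho^{jam}(b_i-x)$ is the congested contribution carried backward at speed $-w$ from the exit $b_i$. Via (\ref{psidef}) a point lies in free flow ($r_i=0$) precisely when the free branch is active, $N^\uparrow\le N^\downarrow$, and in congestion ($r_i=1$) when the congested branch is strictly active, $N^\downarrow<N^\uparrow$.

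The heart of the argument is then a one-line monotonicity computation. Fix $t$ and set $g(x):=N^\uparrow(t,x)-N^\downarrow(t,x)$. Since the entrance and exit cumulative counts are non-decreasing in time with flow rates in $[0,C]$, differentiation in $x$ gives $\partial_x N^\uparrow=-q_{\rm in}/v_f\in[-\rho_c,0]$ and $\partial_x N^\downarrow=q_{\rm out}/w-\rho^{jam}\le C/w-\rho^{jam}=-\rho_c$, where I used $C/v_f=\rho_c$ and $C/w=\rho^{jam}-\rho_c$. Hence $\partial_x g=\partial_x N^\uparrow-\partial_x N^\downarrow\ge -\rho_c-(-\rho_c)=0$, so $g(\cdot)$ is non-decreasing on $[a_i,b_i]$. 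Therefore the congested set $\{x:g(x)>0\}$ is an upper interval $(x_i^*(t),b_i]$ and the free-flow set $\{x:g(x)\le 0\}$ is the complementary lower interval; defining $x_i^*(t)$ as the unique sign change of $g$ yields $r_i(t,x)=0$ for $x<x_i^*(t)$ and $r_i(t,x)=1$ for $x>x_i^*(t)$, with a single upward jump and no downward jumps. This establishes statement~2 directly and statement~1 as the uniqueness of the up-jump.

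The step I would spend the most care on — and the main obstacle — is justifying the collapse of the Lax--Hopf infimum to the minimum of just the two extreme-speed branches, i.e. that the piecewise-linear $f_i$ forces the optimal $u$ in (\ref{LHformula}) to an endpoint of $[f_i'(\rho^{jam}-),f_i'(0+)]$, and that the empty initial datum folds into $N^\uparrow$ without weakening the slope bound (the initial branch is free-flow with spatial slope $0\in[-\rho_c,0]$, and the lower bound $\partial_x(\cdot)\ge-\rho_c$ is preserved under taking pointwise minima). I would then dispatch the degenerate cases, all consistent with the ``at most one'' wording since each contributes zero interior separating shocks: $g>0$ throughout (full spillback, $r_i\equiv1$), $g\le0$ throughout (uncongested link, $r_i\equiv0$), and the tie $N^\uparrow=N^\downarrow$ at capacity $\rho=\rho_c$, where $\psi$ assigns the common critical density and $r_i$ is read as the boundary value of the separating shock itself.
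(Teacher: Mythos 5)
Your argument is correct, but it is genuinely different from what the paper does: the paper offers no proof at all for this lemma and simply defers to \cite{Bretti et al}, whereas you supply a self-contained variational proof. Your route is in fact more consonant with the rest of the paper's machinery than the citation is: you write $N_i=\min\{N^{\uparrow},N^{\downarrow}\}$ via the Lax--Hopf formula (\ref{LHformula}), identify the regime with the active branch through $\partial_x N=-\rho$ and the map (\ref{psidef}), and then observe that $\partial_x N^{\uparrow}\ge -\rho_c\ge \partial_x N^{\downarrow}$ forces $g=N^{\uparrow}-N^{\downarrow}$ to be non-decreasing in $x$, so the congested set is an upper interval with a single up-crossing $x_i^*(t)$. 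This is exactly the comparison of $N_{up}$ and $N_{down}$ that the paper exploits later in Theorem \ref{thmdetection}, so your proof effectively unifies Lemma \ref{sshocklemma} with the detection criteria (\ref{latent3})--(\ref{latent4}) rather than importing the lemma as a black box. The one step you rightly flag --- that for a triangular fundamental diagram the Lax--Hopf infimum is attained only at the extreme characteristic speeds $f_i'(0+)$ and $f_i'(\rho^{jam}-)$, so only the two boundary branches and the (constant, hence harmless) empty-initial-datum branch survive --- is not something you can wave away, but it is a standard consequence of the piecewise-linear Hamiltonian established in \cite{Newell} and \cite{VT1}, and citing it there is legitimate. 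Your handling of the degenerate cases ($g>0$ or $g\le 0$ throughout, and the tie at critical density) is also appropriate, since the lemma only claims \emph{at most} one interior separating point. What your approach buys is an explicit, checkable argument tied to the paper's own cumulative-count variables; what the paper's citation buys is coverage of the general entropy-solution framework of \cite{Bretti et al} without committing to the variational representation.
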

\begin{proof}
See \cite{Bretti et al}.
\end{proof}

According to Lemma \ref{sshocklemma}, the separating shock  emerges from the downstream boundary $b_i$ of the link, and propagates towards the interior of the link. The speed of this separating shock is given by the  Rankine-Hugoniot condition \cite{Evans}.
%
%
%
%
%
It is clear that as long as the separating shock remains in the interior of the link $I_i$,  the upstream and downstream boundary conditions  do not interact. Thus the exit of a link remains in the congested phase; while the entrance remains in the free flow phase. Consequently, the Riemann Solver  in Theorem \ref{thm1} is expressed entirely with exogenous parameters  $C_i$ and $\alpha_{1,i}$.  On the other hand, if the separating shock reaches either boundary, it becomes a latent shock. Two cases may arise.

\vspace{0.1 in}
\noindent i) The the shock reaches the exit, i.e. $x_i^*(t)=b_i$. In this case, the current link is dominated by free flow phase. The boundary condition at $x=a_i$ directly influences the boundary condition at $x=b_i$, in a way  expressed by
\begin{equation}\label{caseii}
q_i\left(t,\,b_i-\right)~=~q_i\left(t-{L_i\over k_i},\, a_i\right)
\end{equation}
where $L_i$ is the length of the link, $k_i$ is the speed of forward wave propagation. See Figure \ref{figextreme} for an illustration. 

\vspace{0.1 in}
\noindent ii) The shock reaches the entrance, i.e. $x_i^*(t)=a_i$. In this case,  the current link is dominated by congested phase. The  boundary condition at $x=b_i$ directly affects the boundary condition at $x=a_i$
\begin{equation}\label{casei}
q_i\left(t,\,a_i+\right)~=~q_i\left(t-{L_i\over w_i},\, b_i\right)
\end{equation}
where $w_i$ is the speed of backward wave propagation.

\begin{figure}[h!]
\centering
\includegraphics[width=.75\textwidth]{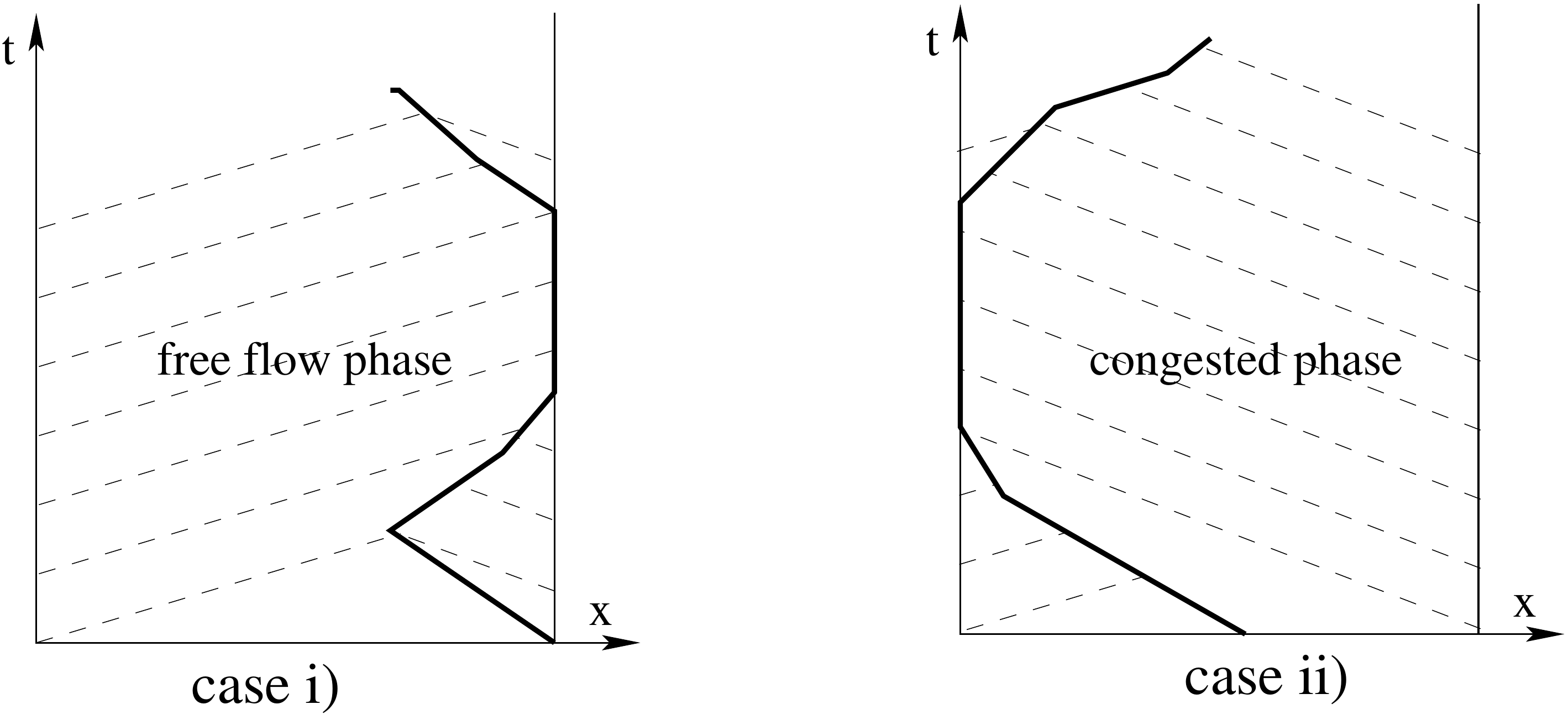}
\caption{Example of latent separating shocks. Case i): the separating shock reaches the right (downstream) boundary. Case ii): the separating shock reaches the left (upstream) boundary.}
\label{figextreme}
\end{figure}

In either case, the Riemann solver involves boundary flows (\ref{caseii}), (\ref{casei}), which are endogenous. The next key step towards the link-based flow model is the detection of these two extreme cases. This can be done with the Lax-Hopf formula.

\subsection{The variational approach for detecting latent shock}

This section provides sufficient and necessary condition for the occurrence of the latent shock. The derivation is omitted for brevity, we refer the reader  to \cite{HPFY} for a detailed discussion.  Define for each link $I_i$ the cumulative entering and exiting vehicle numbers 
$$
N_{i, up}(t)~\doteq~\int_0^t q_i(s,\,a_i)\,ds,\qquad N_{i, down}~\doteq~\int_0^t q_i(s,\,b_i)\,ds
$$

Recall that the separating shock $x_i^*(\cdot): [0,\,+\infty)\rightarrow [a_i,\,b_i]$ is a continuous curve in the $t - x$ domain. The following theorem provides sufficient and necessary condition for the occurrence of the latent shock. 
\begin{theorem}\label{thmdetection}
Let $N_i(\cdot,\,\cdot): [0,\,+\infty)\times[a_i,\,b_i]$ be the unique viscosity solution to the Hamilton-Jacobi equation (\ref{HJeqn}), satisfying zero initial condition, upstream boundary condition $N_{i, up}(\cdot)$ and downstream boundary condition $N_{i, down}(\cdot)$. Then for all $t\geq 0$,
\begin{align}
\label{latent3}
x_i^*(t)&~=~a_i~\Longleftrightarrow~N_{i, up}\left(t\right)~\geq~N_{i, down}\left(t-{L_i\over w_i}\right)+\rho_i^{jam}L_i\\
\label{latent4}
x_i^*(t)&~=~b_i~\Longleftrightarrow~ N_{i,up}\left(t-{L_i\over k_i}\right)~\leq~N_{i, down}\left(t\right)
\end{align}
\end{theorem}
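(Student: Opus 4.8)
The plan is to evaluate the Lax-Hopf formula (\ref{LHformula}) for the Moskowitz function $N_i$ on the link, and then to read off the regime $r_i$ from which value condition attains the minimum. First I would specialize the admissible slopes in (\ref{LHformula}) to the two extreme characteristic speeds of the triangular diagram $f_i$, namely $u=f_i'(0+)=k_i$ (the forward free-flow speed) and $u=f_i'(\rho_i^{jam}-)=-w_i$ (the backward congested speed). A direct computation of the Legendre transform along these two rays gives the characteristic costs $f_i^*(k_i)=0$ and $f_i^*(-w_i)=w_i\rho_i^{jam}$, corresponding respectively to propagation along free-flow and along jam characteristics.

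Next I would restrict the value condition $c(\cdot,\cdot)$ to the data carried on $\Omega=\{t=0\}\cup\{x=a_i\}\cup\{x=b_i\}$, with $c\equiv 0$ on the (empty) initial segment, $c=N_{i,up}$ on the upstream boundary, and $c=N_{i,down}$ on the downstream boundary. Tracing the forward characteristic of slope $k_i$ back to $x=a_i$ (so that $T=(x-a_i)/k_i$) and the backward characteristic of slope $-w_i$ back to $x=b_i$ (so that $T=(b_i-x)/w_i$), and inserting the two costs above, I obtain the explicit representation
\begin{equation*}
N_i(t,x)~=~\min\left\{ N_{i,up}\!\left(t-{x-a_i\over k_i}\right),~ N_{i,down}\!\left(t-{b_i-x\over w_i}\right)+\rho_i^{jam}(b_i-x),~(\text{initial term})\right\},
\end{equation*}
where on an initially empty link the initial term only reinforces the free-flow branch and never produces congestion, so it does not affect the identification below.

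The central step is the regime identification. I would argue that $(t,x)$ lies in the free-flow phase ($r_i=0$) precisely when the upstream term attains the minimum -- information reaches $(t,x)$ along a forward free-flow characteristic -- and in the congested phase ($r_i=1$) precisely when the downstream term attains it. Combined with Lemma \ref{sshocklemma}, which guarantees a single crossover with free flow upstream and congestion downstream, this places the separating shock $x_i^*(t)$ exactly where the two boundary terms coincide. Evaluating at the endpoints then yields the two equivalences directly: at $x=a_i$ the downstream term dominates iff $N_{i,up}(t)\ge N_{i,down}(t-L_i/w_i)+\rho_i^{jam}L_i$, which is (\ref{latent3}); at $x=b_i$ the upstream term dominates iff $N_{i,up}(t-L_i/k_i)\le N_{i,down}(t)$, which is (\ref{latent4}).

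The main obstacle I anticipate is the rigorous justification of the regime identification, that ``upstream term active'' is genuinely equivalent to $r_i=0$ and that the crossover of the two Lax-Hopf terms coincides with the Rankine-Hugoniot separating shock of Lemma \ref{sshocklemma} rather than merely bounding its location. Care is needed because the boundary data of a conservation law are attained only in the weak (BLN) sense, so one must verify that the upstream and downstream value conditions are actually active at the respective endpoints and that the zero initial contribution never undercuts the congested branch. Monotonicity of the difference of the two boundary terms in $x$ secures uniqueness of the crossover and upgrades the endpoint inequalities from necessary to sufficient, completing the argument.
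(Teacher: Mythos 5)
Your proposal is correct and follows exactly the route the paper intends: the paper omits the derivation (deferring to \cite{HPFY}), but its stated mechanism is precisely the Lax--Hopf/Newell three-detector representation you write down, with the two extreme characteristic costs $f_i^*(k_i)=0$ and $f_i^*(-w_i)=w_i\rho_i^{jam}$ and the regime read off from whichever boundary term attains the minimum. Your closing observation that the difference of the two boundary terms is nondecreasing in $x$ (since $-q_{up}/k_i-q_{down}/w_i+\rho_i^{jam}\geq 0$ for a triangular diagram) is the right way to reconcile the crossover with Lemma \ref{sshocklemma} and to get both directions of the equivalences.
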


\begin{remark}
The significance of criteria (\ref{latent3})-(\ref{latent4}) is that the two extreme cases can be detected without any computation within the link. This is because $N_{up}(\cdot)$, $N_{down}(\cdot)$ are determined completely by the boundary flows. 
Theorem \ref{thmdetection} is the key ingredient  of the LKWM, which allows the network model to be solved at the link level.

Analytical properties of the LKWM pertaining to solution existence, uniqueness and well-posedness are provided in \cite{HPFY}. 
\end{remark}

\section{Traffic signal control problem based on the LKWM}\label{secTS}

In this section,  the signal control problem is formulated with LKWM.   We start with a single junction, with two incoming links $I_1$, $I_2$, and two outgoing links $I_3$, $I_4$ (Figure \ref{figintersection}).
\begin{figure}[h!]
\centering
\includegraphics[width=.25\textwidth]{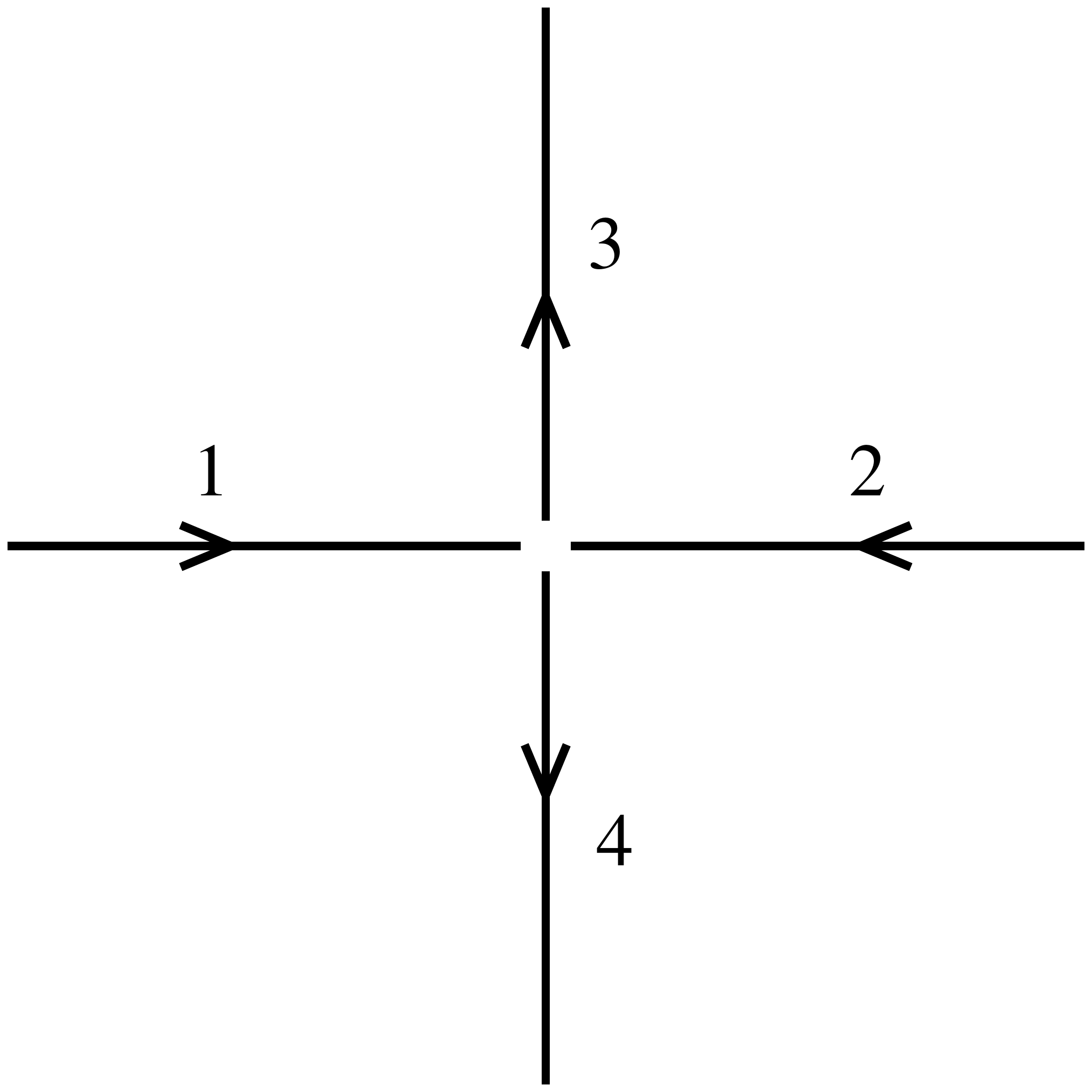}
\caption{A signalized junction with two incoming links and two outgoing links.}
\label{figintersection}
\end{figure}
Each link is represented by a spatial interval $[a_i,\,b_i],\, i=1,\,2,\,3,\,4$. The fundamental diagrams for each link is given by 
$$
f_i(\rho)~=~\begin{cases}
k_i\,\rho\qquad & \rho\in[0,\,\rho_i^*]\\
-w_i\,(\rho-\rho_i^{jam})\qquad & \rho\in(\rho_i^*,\,\rho_i^{jam}]
\end{cases},\qquad i~=~1,\,2,\,3,\,4
$$
with $C_i\doteq k_i\,\rho_i^*$ being the flow capacity. We make the following assumptions

\noindent {\bf A1} The network is initially empty. 

\noindent {\bf A2} Drivers arriving at the junction distribute on the outgoing roads according to some known coefficients:
$$
A~=~\begin{pmatrix} \alpha_{1,3}   &  \alpha_{1,4} \\ \alpha_{2,3}  &  \alpha_{2,4}\end{pmatrix}
$$ 
where $\alpha_{ij}$ denotes the percentage of traffic coming from link $I_i$ that distributes to outgoing link $I_j$.

In the problem setting, the flows $\bar q_i(\cdot),\,i=1,\,2$ entering links $I_1$ and $I_2$ are known. In practice, $\bar q_i(\cdot)$ can be measured at the entrance using fixed sensors such as loop-detectors.

\subsection{Continuous-time formulation}

In this section, we formulate the constraints  of the system in continuous time. In Section \ref{secdiscretetime} we will reformulate the system dynamic as linear constraints in discrete time using binary variables. Notice that the discussion in this section can be the building block for extension to networks with multiple intersections.

 Let us fix the planning horizon $[0,\,T]$ for some fixed $T>0$.  Introducing the piecewise-constant control variables $u_i(\cdot): [0,\,T]\rightarrow \{0,\,1\},\,i=1,\,2$, with the agreement that $u_i(t)=0$ if the light is red for link $I_i$, and $u_i(t)=1$ if the light is green for link $I_i$.  It is convenient to use the following set of notations. For $i=1,\,2,\,3,\,4$.
\begin{align*}
\bar q_i(\cdot), \qquad\qquad &\hbox{the flow of cars  entering link }I_i, \\
\hat q_i(\cdot), \qquad\qquad &\hbox{the flow of cars  exiting link }I_i, \\
\bar r_{i}(\cdot), \qquad  \qquad & \hbox{the binary variable that indicates the regime at } x~=~a_i+,\\
\hat r_{i}(\cdot), \qquad  \qquad & \hbox{the binary variable that indicates the regime at } x~=~b_i-,\\
\bar q_{i}^{max}(\cdot), \qquad\qquad &\hbox{the maximum flow allowed to enter the link }I_i, \\
\hat q_{i}^{max}(\cdot), \qquad\qquad &\hbox{the maximum flow allowed to exit the link }I_i,\\
N_{up,i}(\cdot), \qquad\qquad  & \hbox{the cumulative number of cars that have entered  link } I_i,\\
N_{down,i}(\cdot),\qquad\qquad & \hbox{the cumulative number of cars that have exited link } I_i,\\
u_i(\cdot) ,\qquad\qquad &\hbox{the signal control variable for link } I_i,
\end{align*}

\begin{theorem}\label{cpthm} The dynamics at the junction (Figure \ref{figintersection}) with signal control can be described by the following system of differential algebraic equations (DAE) with binary variables.
\begin{align}
\label{cpeqn1}
&{d\over dt}N_{up,i}(t)=\bar q_i(t),\qquad {d\over dt}N_{down, i}(t)=\hat q_i(t),\qquad \qquad\quad~ i=1,\,2,\,3,\,4\\
\label{cpeqn2}
\bar r_i(t)&=\begin{cases}
 1,\quad &\hbox{if}\quad N_{up,i}(t)\geq N_{down,i}\left(t-{L_i\over w_i} \right)+\rho_i^{jam}L_i\\
0, \quad  &\hbox{otherwise}
\end{cases},\quad~ i=1,\,2,\,3,\,4\\
\label{cpeqn3}
\hat r_i(t)&=\begin{cases}
 0,\quad &\hbox{if}\quad N_{up,i}\left(t-{L_i\over k_i}\right) \leq N_{down,i}(t)\\
1, \quad  &\hbox{otherwise}
\end{cases},\qquad\qquad\quad~\, i=1,\,2,\,3,\,4\\
\label{cpeqn4}
\bar q_i^{max}(t)&=C_i+\bar r_i(t)\left( \hat q_i\left(t-{L_i\over w_i}\right)-C_i \right),\qquad\qquad\qquad\qquad\quad~ i=1,\,2\\
\label{cpeqn5}
\hat q_i^{max}(t)&=\bar q_i\left(t-{L_i\over k_i}\right)+\hat r_i(t)\left( C_i-\bar q_i\left(t-{L_i\over k_i}\right) \right),\qquad\qquad\quad~ i=3,\,4\\
\label{cpeqn6}
\hat q_i(t)&=\begin{cases}
0,\quad &\hbox{if} \quad u_i(t)=0\\
 \min\left\{\hat q^{max}_i(t),\,{\bar q^{max}_3(t)\over\alpha_{i,3}},\,{\bar q_4^{max}(t)\over\alpha_{i,4}}\right\} \quad &\hbox{if}\quad u_i(t)=1 
\end{cases},\quad i=1,\,2\\
\label{cpeqn7}
&\qquad\qquad\qquad\bar q_k(t)=\alpha_{1,k}\,\hat q_1(t)+\alpha_{2,k}\,\hat q_2(t),\qquad\qquad\quad~ k=3,\,4,\\
\label{cpeqn8}
&\qquad\qquad\qquad\qquad u_1(t)+u_2(t)=1\qquad \forall ~t\in[0,\,T]
\end{align}
\end{theorem}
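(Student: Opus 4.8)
The plan is to verify the eight relations (\ref{cpeqn1})--(\ref{cpeqn8}) one at a time by specializing the general machinery of Section \ref{LLWR} to the four-link configuration of Figure \ref{figintersection}, the crucial structural observation being that the phase constraint reduces the junction, at each instant, to the one-incoming-link case already solved in Theorem \ref{thm1}. I would organize the argument in the order in which the quantities are introduced. First, (\ref{cpeqn1}) is immediate: $N_{up,i}$ and $N_{down,i}$ were \emph{defined} as the time-integrals of the boundary flows $\bar q_i$ and $\hat q_i$ immediately before Theorem \ref{thmdetection}, so differentiating recovers the two identities by the fundamental theorem of calculus.

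Next, I would read off the regime indicators (\ref{cpeqn2})--(\ref{cpeqn3}) directly from Theorem \ref{thmdetection}. By Lemma \ref{sshocklemma} each link carries at most one separating shock, with free flow upstream of it and congestion downstream; hence the entrance $x=a_i+$ is congested ($\bar r_i=1$) exactly when the shock has reached $a_i$, i.e. $x_i^*(t)=a_i$, and the exit $x=b_i-$ is in free flow ($\hat r_i=0$) exactly when $x_i^*(t)=b_i$. Substituting the equivalences (\ref{latent3})--(\ref{latent4}) then yields (\ref{cpeqn2}) and (\ref{cpeqn3}) verbatim. I would then derive the supply and demand functions (\ref{cpeqn4})--(\ref{cpeqn5}) as the dynamic incarnation of the quantities $q_i^{max}$ in (\ref{thm1eqn7}). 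For an incoming link ($i=1,2$), $\bar q_i^{max}$ is its receiving capacity at $x=a_i$: when the entrance is uncongested ($\bar r_i=0$) this equals $C_i$, while when it is congested ($\bar r_i=1$) the shock sits at $a_i$ and the propagation law (\ref{casei}) carries the downstream flow backward to give $\hat q_i(t-L_i/w_i)$; writing the interpolant $C_i+\bar r_i(\hat q_i(t-L_i/w_i)-C_i)$ reproduces (\ref{cpeqn4}) and matches the outgoing-link branch of (\ref{thm1eqn7}). Symmetrically, for an outgoing link ($i=3,4$), $\hat q_i^{max}$ is its sending capacity at $x=b_i$: congestion ($\hat r_i=1$) gives $C_i$, whereas free flow ($\hat r_i=0$) lets (\ref{caseii}) transport the upstream flow forward to $\bar q_i(t-L_i/k_i)$, and the interpolant $\bar q_i(t-L_i/k_i)+\hat r_i(C_i-\bar q_i(t-L_i/k_i))$ gives (\ref{cpeqn5}), matching the incoming-link branch of (\ref{thm1eqn7}).

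It remains to assemble the junction solver (\ref{cpeqn6})--(\ref{cpeqn7}) and impose (\ref{cpeqn8}). The phase constraint (\ref{cpeqn8}) forces exactly one of $u_1,u_2$ to be $1$ at each instant, so only a single incoming link discharges into the junction; this is precisely the one-incoming-link configuration of Theorem \ref{thm1}, whose Riemann solver (\ref{thm1eqn3}) produces the green-phase exit flow $\min\{\hat q_i^{max},\bar q_3^{max}/\alpha_{i,3},\bar q_4^{max}/\alpha_{i,4}\}$, while the red phase sends zero, giving (\ref{cpeqn6}). Summing the turning relation (\ref{thm1eqn5}) over the two incoming links (the inactive one contributing zero) yields the conservation law (\ref{cpeqn7}), and (\ref{cpeqn8}) is the standing modeling hypothesis that one phase serves one incoming link.

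The main obstacle I anticipate is not any single computation but the conceptual passage from the \emph{static} Riemann problem of Theorem \ref{thm1}, posed with constant data, to the genuinely \emph{dynamic} junction with time-varying boundary states and the retardations $L_i/k_i$ and $L_i/w_i$. One must argue that the Riemann solver acts pointwise in time on the instantaneous demands and supplies, and that the endogenous boundary states feeding it are consistently recovered from the propagation laws (\ref{caseii})--(\ref{casei}); keeping the forward speed $k_i$ (free flow, upstream-to-downstream) paired with demand and the backward speed $w_i$ (congestion, downstream-to-upstream) paired with supply is exactly where the bookkeeping must be done with care.
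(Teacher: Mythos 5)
Your proposal is correct and takes essentially the same route as the paper's proof: (\ref{cpeqn1}) by definition, (\ref{cpeqn2})--(\ref{cpeqn3}) from Theorem \ref{thmdetection}, (\ref{cpeqn4})--(\ref{cpeqn5}) from (\ref{thm1eqn7}) together with the propagation laws (\ref{caseii}) and (\ref{casei}), (\ref{cpeqn6}) from the one-incoming-link Riemann solver (\ref{thm1eqn3}) under the phase constraint, and (\ref{cpeqn7})--(\ref{cpeqn8}) from the turning coefficients and the modeling hypothesis. One small remark: the index ranges printed in (\ref{cpeqn4})--(\ref{cpeqn5}) appear swapped relative to their use in (\ref{cpeqn6}) (the paper's own proof derives the demand $\hat q_i^{max}$ for the incoming links $i=1,2$ and the supply $\bar q_i^{max}$ for the outgoing links $i=3,4$), and your narrative inherits that swap, but the formulas and their derivation are unaffected.
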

\begin{proof}
(\ref{cpeqn1}) is by definition. For $i=1,2,3,4$, if the separating shock on link $I_i$ reaches the entrance  $a_i$ (exit $b_i$), then the regime variable $\bar r_i=1$ ($\bar r_i=0$). Then (\ref{cpeqn2})-(\ref{cpeqn3}) follows from Theorem \ref{thmdetection}. 

The demand function $\hat q_i^{max}(\cdot)$ for incoming links  and the supply function $\bar q_i^{max}(\cdot)$ for outgoing links  are given by (\ref{thm1eqn7}): for $i=1,\,2$, if $\hat r_i(t)=1$, then $\hat q_i^{max}(t)=C_i$; otherwise if $\hat r_i(t)=0$, then according to (\ref{caseii}),  
$$\hat q_i^{max}(t)=q_i(t,\,b_i-)=q_i\left(t-{L_i\over k_i},\,a_i\right)=\bar q_i\left(t-{L_i\over k_i}\right)
$$ 
This shows (\ref{cpeqn5}). One can similarly show (\ref{cpeqn4}) using (\ref{thm1eqn7}) and (\ref{casei}). 

For $i=1,\,2$, if $u_i(t)=0$ which means the light is red, then the flow allowed through is zero, otherwise, it is given by(\ref{thm1eqn3}). This proves (\ref{cpeqn6}). 

(\ref{cpeqn7}) follows from the definition of the splitting parameters $\alpha_{i,k}$, $i=1,\,2,\,k=3,\,4$. (\ref{cpeqn7}) guarantees that at each time, there are one and only one incoming road  that has green light. 
\end{proof}

\subsection{Discrete-time formulation}\label{secdiscretetime}

In this section, we present the discrete-time version of the optimization problem in Theorem  \ref{cpthm}. Let us introduce a few more notations for the convenience of our presentation. Consider a uniform time grid
$$
0~=~t^0~<~t^1~\ldots~<~t^N~=~T,\qquad t^j-t^{j-1}~=~\delta t,\quad j~=~1,\ldots, N
$$
Throughout the rest of this article, we use superscript `j' to denote the discrete value evaluated at time step $t^j$.  In addition, we let  $L_i/k_i= \Delta^f_i\delta t,\,L_i/w_i=\Delta^b_i\delta t,\,\,\Delta_i^f\in\mathbb{N}, \, \Delta_i^b\in\mathbb{N},\,i=1, 2, 3, 4$.

Approximating the numerical integration with rectangular quadratures,  we write equality (\ref{cpeqn2}) and (\ref{cpeqn3})  in discrete time as

\begin{equation}\label{dtcpeqn1}
\begin{cases}
\displaystyle \delta t\sum_{j=0}^{k-\Delta_i^b} \hat q_i^j -\delta t\sum_{j=0}^{k}\bar q_i^j+\rho^{jam}L_i~\leq~\mathcal{M}\,(1-\bar r_i^k)\\
\displaystyle \delta t\sum_{j=0}^{k-\Delta_i^b} \hat q_i^j -\delta t\sum_{j=0}^{k}\bar q_i^j+\rho^{jam}L_i~\geq~-\mathcal{M}\,\bar r_i^k+\varepsilon
\end{cases}
\quad \Delta_i^b~\leq~k~\leq~N,\quad i~=~1,\,2,\,3,\,4
\end{equation}
\begin{equation}\label{dtcpeqn2}
\begin{cases}
\displaystyle \delta t\sum_{j=0}^{k-\Delta_i^f} \bar q_i^j -\delta t\sum_{j=0}^{k}\hat q_i^j~\leq~\mathcal{M}\,\hat r_i^k\\
\displaystyle \delta t\sum_{j=0}^{k-\Delta_i^f} \bar q_i^j -\delta t\sum_{j=0}^{k}\hat q_i^j~\geq~\mathcal{M}\,(\hat r_i^k-1)+\varepsilon
\end{cases}
\quad \Delta_i^f~\leq~k~\leq~N,\quad i~=~1,\,2,\,3,\,4
\end{equation}
where $\bar r_i^k,\,\hat r_i^k\in\{0,\,1\}$. $\mathcal{M}\in\mathbb{R}_+$ is a sufficiently large number, $\varepsilon\in\mathbb{R}_+$ is a sufficiently small number. Constraints (\ref{dtcpeqn1}) and (\ref{dtcpeqn2}) determines the regime variables associated with the two boundaries of each link. Once the flow phases are determined, the demand and supply functions (\ref{cpeqn4}), (\ref{cpeqn5}) are re-written in discrete time as
\begin{equation}\label{dtcpeqn3}
\begin{cases}
\displaystyle C_i-\mathcal{M}\,\bar r_i^j~\leq~\bar  q_i^{max,j}~\leq~C_i\\
\displaystyle  \hat  q_i^{j-\Delta^b_i}-\mathcal{M}\,(1-\bar r_i^j)~\leq~\bar q_i^{max,j}~\leq~ \hat  q_i^{j-\Delta^b_i}+\mathcal{M}\,(1-\bar r_i^j)
\end{cases}\qquad i~=~1,\,2,\,3,\,4
\end{equation}
\begin{equation}\label{dtcpeqn4}
\begin{cases}
\displaystyle C_i+\mathcal{M}\,(\hat r_i^j-1)~\leq~\hat q_i^{max,j}~\leq~C_i\\
\displaystyle \bar  q_i^{j-\Delta^f_i}-\mathcal{M}\,\hat r_i^j~\leq~\hat  q_i^{max,j}~\leq~\bar  q_i^{j-\Delta_i^f}+\mathcal{M}\,\hat r_i^j
\end{cases}\qquad\qquad\qquad~~ i~=~1,\,2,\,3,\,4
\end{equation}
Next, let us re-formulate (\ref{cpeqn6}). Introducing dummy variables $\zeta_1^j,\, \zeta_2^j$, $1\leq j\leq N$, such that 
\begin{equation}\label{zetadef}
\zeta_i^j~=~\min\left\{\hat q_i^{max, j},\, {\bar q_3^{max,j}\over \alpha_{i,3}}\, {\bar q_4^{max,j}\over \alpha_{i,4}}\right\}\qquad i~=~1,\,2
\end{equation}
Then the discrete-time version of (\ref{cpeqn6}) can be readily written as 
\begin{equation}\label{dtcpeqn5}
\begin{cases}
0~\leq~\hat q_i^j~\leq~\mathcal{M}\,u_i^j\\
\zeta_1^j+\mathcal{M}\,(u_i^j-1)~\leq~\hat q_i^j~\leq~\zeta_1^j
\end{cases}\qquad i~=~1,\,2,\quad j~=~1,\ldots, N
\end{equation}
In order to write (\ref{zetadef}) as linear constraints, one could  write it as three ``less or equal" statements, which is simple but bear the potential limitation of traffic holding. Instead, one may introduce additional binary variables $\xi_i^j,\, \eta_i^j$ and real variables $\beta_i^j$ for $i=1,\,2$, $j=1,\ldots, N$, such that (\ref{zetadef}) can be accurately formulated as
\begin{equation}\label{dtcpeqn6}
\begin{cases}
\bar q_3^{max, j}/ \alpha_{i,3}-\mathcal{M}\,\xi_i^j~\leq~\beta_i^j~\leq~\bar q_3^{max,j}/\alpha_{i,3}\\
\bar q_4^{max,j}/\alpha_{i,4}-\mathcal{M}\,(1-\xi_i^j)~\leq~\beta_i^j~\leq~\bar q_4^{max,j}/\alpha_{i,4}\\
\hat q_i^{max, j}-\mathcal{M}\,\eta_i^j~\leq~\zeta_i^j~\leq~\hat q_i^{max,j}\\
\beta_i^j-\mathcal{M}\,(1-\eta_i^j)~\leq~\zeta_i^j~\leq~\beta_i^j
\end{cases} \qquad i~=~1,\,2
\end{equation}
Finally, we have the obvious relations
\begin{equation}\label{dtcpeqn7}
\bar q_k^j(t)~=~\alpha_{1, k}\hat q_1^j(t)+\alpha_{2,k}\hat q_2^j(t)\qquad k~=~3,\,4,\qquad j~=~1,\ldots, N
\end{equation}
and 
\begin{equation}\label{dtcpeqn8}
u_1^j+u_2^j~=~1\qquad j~=~1,\ldots, N
\end{equation}

The proposed MILP formulation of signal control problem is summarized by (\ref{dtcpeqn1})-(\ref{dtcpeqn4}) and (\ref{dtcpeqn5})-(\ref{dtcpeqn8}). This formulation captures many desirable features of vehicular flow on networks such as physical queues, spill back, vehicle turning, and shock formation and propagation (although not explicitly). The signal control allows time-varying cycle length and splits, as well as the utilization of real-time information of traffic flows.

\subsection{Bound the separating shock}
At the end of this section, we discuss an additional linear constraint that ensures that the congested phase on each link is bounded. Such condition is closely related to travel delay:  if the congested phase remain bounded, there will be a reasonable upper bound for the travel time of each driver. Let us consider a single link $[a,\,b]$.  If we wish to bound the separating shock within the interval $[c,\,b]$ for some $a<c<b$,  this means (with the same notation as before) that
\begin{equation}\label{revision1}
\int_{0}^{\,t} q(s,\,c)\,ds ~\leq~\int_{0}^{\,t-{b-c\over w}}\hat q(s)\,ds+\rho^{jam}\,(b-c)
\end{equation}
(\ref{revision1}) follows by applying (\ref{latent3}) to the interval $[c,\,b]$. It is helpful to notice that since $[a,\,c]$ remains in the free flow phase, $q(s,\,c)$ must be equal to $\bar q\left(s-{c-a\over k}\right)$. Thus the condition for the congested phase to remain in $[c,\,b]$ becomes
\begin{equation}\label{boundedshock}
N_{up}\left(t-{c-a\over k}\right)~\leq~N_{down}\left(t-{b-c\over w}\right)+\rho^{jam}\,(b-c)
\end{equation}
Condition (\ref{boundedshock}) can be easily written as linear constraint in discrete time. 
If one wishes to include (\ref{boundedshock}) in the objective function instead of using it as a constraint, he/she may simply minimize the difference between the left and right hand sides of (\ref{boundedshock}).

\section{Numerical example}\label{secNE}

In this section, we consider the simple network consisting of two junctions, as shown in Figure \ref{fignetwork}. We will show the numerical result of optimal signal control at this junction obtained by the MILP summarized in the previous section.

\begin{figure}[h!]
\centering
\includegraphics[width=.45\textwidth]{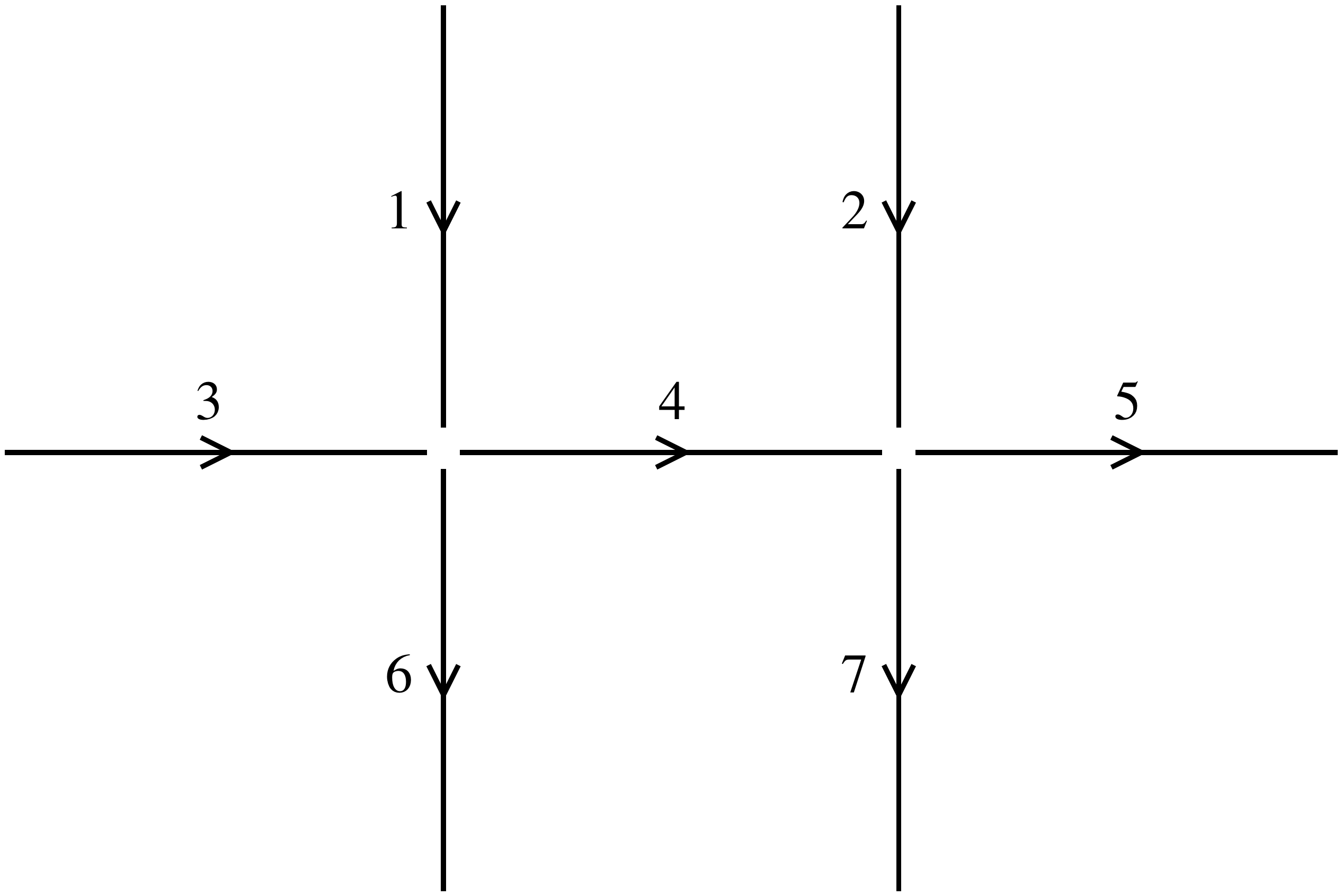}
\caption{Test network with two signalized intersections.}
\label{fignetwork}
\end{figure}

\subsection{Numerical setting}
We assume that the fundamental diagrams for all seven links are the same, that is, for $i=1,\ldots, 7$ 
$$
k_i=30 ~\hbox{mile/hour},\quad w_i=10~\hbox{mile/hour},\quad \rho_i^{jam}=400~\hbox{vehicle/mile},\quad C_i=3000~\hbox{vehicle/hour}
$$
The lengths of all links are set equally to be $0.3$ miles. We choose a time grid of 20 intervals and a time step of 0.005 hour (18 seconds). The flow entering links $I_1, I_2, I_3$ are chosen to be time-varying functions whose value at each time interval is randomly generated between $0$ veh/hour and 3000 veh/hour.  In addition, to ensure the performance of signal control, we include further constraint that the congested phase must not pass the mid-point of the link, i.e. it remains on the spatial interval $[0.15, 0.3]$. This is done by invoking constraint (\ref{boundedshock}). 

The MILP is  solved with ILOG Cplex 12.1.0, which runs with Intel Xeon X$5675$ Six-Core 3.06 GHz processor provided by the Penn State Research Computing and Cyberinfrastructure.

\subsection{Numerical results}

The solution time of the MILP described  above is 0.49 seconds.  In order to have a clear visualization of the optimal signal strategy and the separating shocks on each link, we use the boundary datum $\bar q_i,\,\hat q_i,\,i=1,\,2,\,3,\,4$ obtained from the optimal solution to construct solutions to the Hamilton-Jacobi equation (\ref{HJeqn}) using Lax-Hopf formula. For the H-J equation, the separating shock no longer represents discontinuity, rather, it is displayed as a `kink'  (discontinuity in the first derivative). The Moskowitz functions $N(t,\,x)$ for links $I_3,\,I_4$ are shown in Figure \ref{figNlink3} and \ref{figNlink4}, respectively. The Moskowitz functions for link $I_1$ and $I_2$ viewed from a different angle are shown in Figure \ref{figbirdeye1} and \ref{figbirdeye2}.

One can clearly observe, in each figure, two types of characteristics lines: forward ones and backward ones, representing the free flow and congested phase, respectively. The shared boundary of the two regimes is precisely the separating shock wave, which we managed to implicitly handle with variational method. It is also clear that the congested region never crosses the middle point of the link throughout the planning horizon, thanks to (\ref{boundedshock}).

\begin{figure}[h!]
\begin{minipage}[b]{.49\textwidth}
\centering
\includegraphics[width=\textwidth]{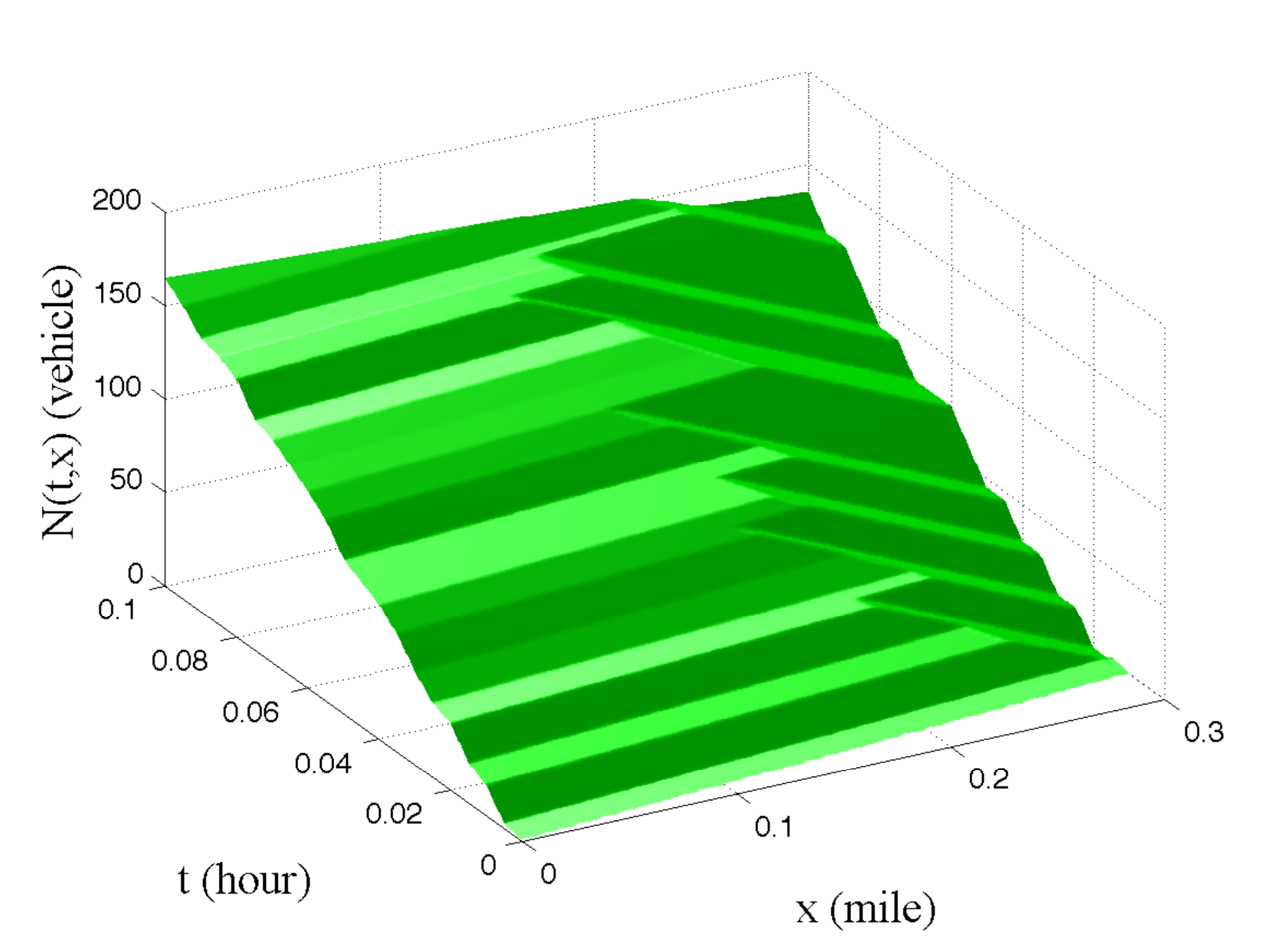}
\caption{Moskowitz function for link $I_3$.}
\label{figNlink3}
\end{minipage}
\begin{minipage}[b]{.49\textwidth}
\centering
\includegraphics[width=\textwidth]{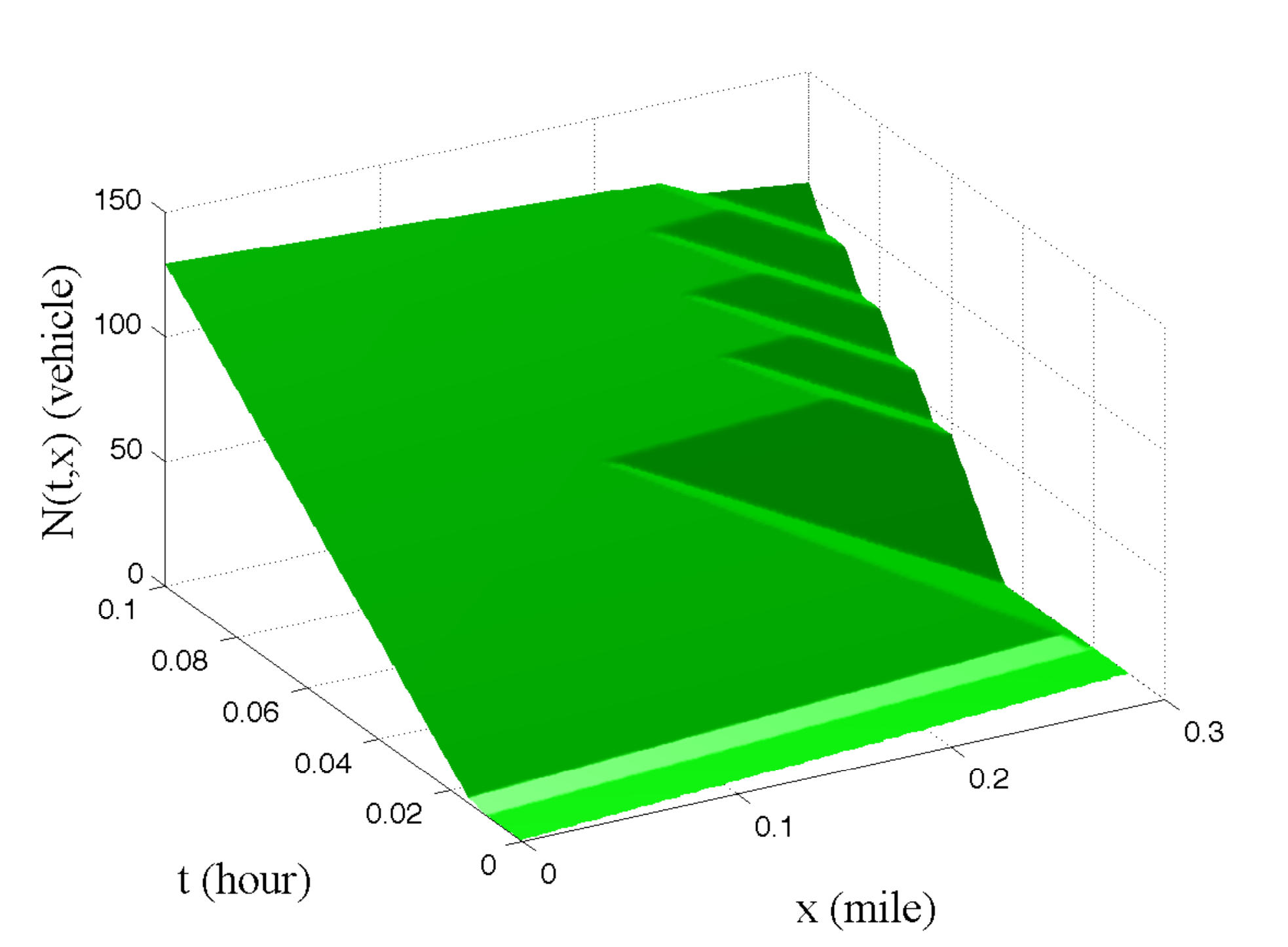}
\caption{Moskowitz function for link $I_4$.}
\label{figNlink4}
\end{minipage}
\end{figure}

\begin{figure}[h!]
\begin{minipage}[b]{.49\textwidth}
\centering
\includegraphics[width=\textwidth]{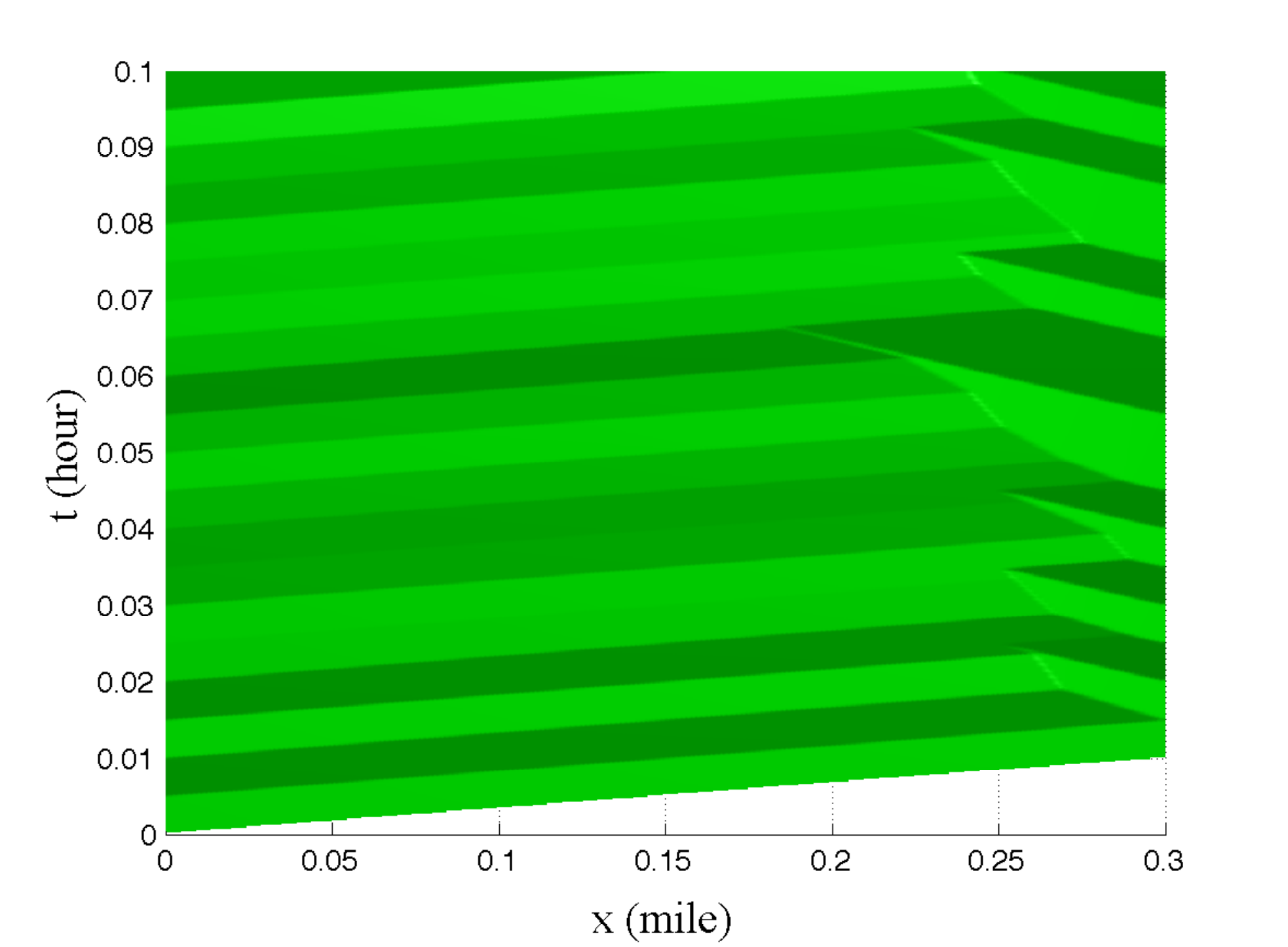}
\caption{Moskowitz function for link $I_1$.}
\label{figbirdeye1}
\end{minipage}
\begin{minipage}[b]{.49\textwidth}
\centering
\includegraphics[width=\textwidth]{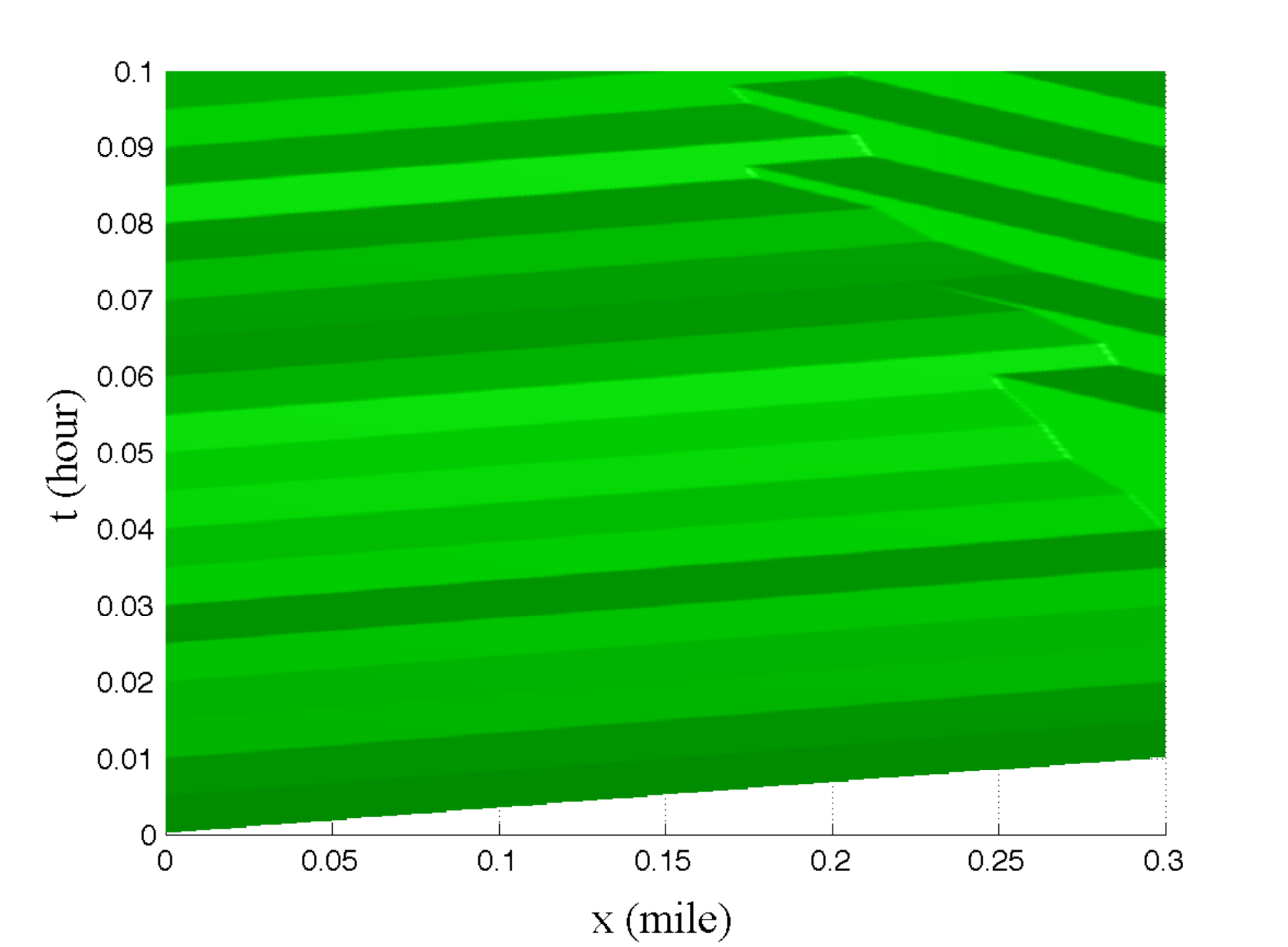}
\caption{Moskowitz function for link $I_2$.}
\label{figbirdeye2}
\end{minipage}
\end{figure}

%

\section{Conclusion}
This paper proposes a link-based signal control problem. The key ingredient of our formulation is the use of the Lax-Hopf formula to detect latent shocks and hence to identity the regimes (free flow/congested) at the entrance and exit of each link. The analytical framework allows us to further bound the congested region and to avoid spillback. The problem of adaptive signal control is formulated in discrete time as a mixed integer linear program. The resulting MILP  requires fewer (binary) variables compared to cell-based approaches.

One limitation of the current approach is the lack of computational tractability, when the problem size scales up. Future research aims at exploring heuristic optimization algorithms such as the genetic algorithms; and new computational paradigms involving parallel computing.

\end{document}